\documentclass[reqno,11 pt]{amsart}
\usepackage{amsmath}
\usepackage[margin=1.0in]{geometry}
\usepackage{amscd,amsmath,amsxtra,amsthm,amssymb,stmaryrd,xr,mathrsfs,mathtools,enumerate,commath, comment, enumitem, graphicx}
\usepackage{amsfonts, amssymb, amsthm, amsmath, braket, hyperref, mathtools, comment, mathrsfs, enumitem, cleveref}
\usepackage{stmaryrd}
\usepackage{amsfonts}
\usepackage{orcidlink}
\usepackage{multirow}
\usepackage{xcolor}
\usepackage{commath}
\usepackage{comment}
\usepackage{tikz-cd}
\usepackage{tkz-graph}
\usepackage{colonequals}
\usepackage{hyperref}
\usepackage{graphicx}
\usepackage{bigstrut}
\usepackage{tabularx}
\usetikzlibrary{calc}
\usepackage{caption}
\numberwithin{equation}{section}
\usepackage{longtable, multirow, array}
\usepackage{cleveref}
\usetikzlibrary{calc,angles,quotes}
\crefname{section}{§}{§§}
\Crefname{section}{§}{§§}
\usetikzlibrary{decorations.pathreplacing,angles,quotes}
\usetikzlibrary{calc, decorations.pathreplacing}

\newcommand{\genlegendre}[4]{%
	\genfrac{(}{)}{}{#1}{#3}{#4}%
	\if\relax\detokenize{#2}\relax\else_{\!#2}\fi
}

\usepackage{longtable} % for 'longtable' environment
\usepackage{pdflscape} % for 'landscape' environment
\usepackage{booktabs}
\usepackage{hyperref}
\definecolor{vegasgold}{rgb}{0.77, 0.7, 0.35}
\definecolor{darkgoldenrod}{rgb}{0.72, 0.53, 0.04}
\definecolor{gold(metallic)}{rgb}{0.83, 0.69, 0.22}
\hypersetup{
 colorlinks=true,
 linkcolor=darkgoldenrod,
 filecolor=brown,      
 urlcolor=gold(metallic),
 citecolor=darkgoldenrod,
 }

\usepackage[all,cmtip]{xy}
\DeclareFontFamily{U}{wncy}{}
\DeclareFontShape{U}{wncy}{m}{n}{<->wncyr10}{}
\DeclareSymbolFont{mcy}{U}{wncy}{m}{n}
\DeclareMathSymbol{\Sh}{\mathord}{mcy}{"58}
\usepackage[T2A,T1]{fontenc}
\usepackage[OT2,T1]{fontenc}

\usepackage{tikz}
\usetikzlibrary{shapes.geometric}
\usetikzlibrary{decorations.markings}
\tikzset{every loop/.style={min distance=10mm,looseness=10}}
\tikzstyle{vertex}=[auto=left,circle,minimum size=1pt,inner sep=0pt]

\newtheorem{theorem}{Theorem}[section]
\newtheorem{lemma}[theorem]{Lemma}

\newtheorem*{theorem*}{Theorem}
\newtheorem*{ass*}{Assumption}
\newtheorem{definition}[theorem]{Definition}
\newtheorem{corollary}[theorem]{Corollary}
\newtheorem{remark}[theorem]{Remark}

\newtheorem{proposition}[theorem]{Proposition}

\newcommand{\Z}{\mathbb{Z}}

\newcommand{\p}{\mathfrak{p}}
\newcommand{\Q}{\mathbb{Q}}

\DeclareFontEncoding{OT2}{}{}
\DeclareSymbolFont{cyrletters}{OT2}{wncyr}{m}{n}
\DeclareMathSymbol{\Sha}{\mathalpha}{cyrletters}{"58}

\numberwithin{equation}{section}

\begin{document}

\title{Some Remarks on \texorpdfstring{$\tau$}{tau}-Congruent Numbers}
\author[Shamik Das]{Shamik Das}
\address[Das]{Department of Mathematics and Statistics, IIT Kanpur, India}
\email{shamikd@iitk.ac.in}

\author[Debajyoti De]{Debajyoti De}
\address[De]{Department of Mathematics, IIT Madras, India}
\email{debajyotide20@gmail.com}

\keywords{}
\subjclass[2020]{11D09, 11G05, 14G05 }
\keywords {Congruent numbers, $\tau$-congruent numbers, rational right triangles, Heron triangles, unit ellipse, Diophantine geometry}

\begin{abstract}
In this paper, we extend the work of \cite{Chahal} in several directions. We first determine all Heron triangles that tightly circumscribe the unit circle and the associated $\tau$-congruent numbers generated by them. We then characterize all rational right triangles that tightly circumscribe the unit ellipse and identify the corresponding congruent numbers. In addition, we study of the congruent numbers from the excircle opposite a vertex of a rational right triangle, that is, the circle tangent to one side of the triangle and to the extensions of the remaining two sides.

\end{abstract}

\maketitle

\section{Introduction}

Rational triangles, triangles whose side lengths are rational numbers have been studied since ancient times. Rational triangles with rational area are known as Heron triangles. The Indian mathematician Brahmagupta, 598--668 A.D., considered triangles with integral sides and integral area.  In this article, we focus primarily on $\tau$-congruent numbers, which are defined as follows:

\begin{definition}
Let $\tau=\tan\left(\frac{\theta}{2}\right)\in \mathbb{Q}_{>0},$
where $0<\theta<\pi$. A positive integer $n$ is called a
$\tau$-congruent number if there exists a triangle with rational side
lengths, having one angle equal to $\theta$, and area equal to $n$. Equivalently, there exist $a,b\in \mathbb{Q}_{>0}$ such that
\begin{equation*}
n=\frac{1}{2}ab\sin\theta
 =\frac{ab\tau}{1+\tau^{2}},  \quad \text{ where } \sin\theta=\frac{2\tau}{1+\tau^{2}}.    
\end{equation*}
 \end{definition}
Note that if $\tau=1$, then $n$ reduces to a classical congruent number; that is, $n$ occurs as the area of a right-angled Heron triangle. Given any positive integer $n$, it is well known that there always exists a triangle with rational
sides $(a,b,c)_\theta$ such that the area of the triangle is $n$, or equivalently, $n$ is a $\tau$-congruent number. In \cite{Goins-Maddox}, Goins and Maddox proved that the existence of rational points of order greater than $2$ on the elliptic curve
\[
E_{\tau,n}: y^2=x(x-n\tau)(x+n\tau^{-1})
\]
is equivalent to the existence of a Heron triangle of area $n$.  For a fixed integer $n$, the existence of infinitely many Heron triangles with area $n$ was investigated in \cite{Rusin}. Furthermore, \cite{Ghale-Das-Chakraborty} studied the possibility of a prime $p$ being a $\frac{1}{p}$-congruent number.

%Among these, the most classical and well studied are right triangles whose side lengths are rational numbers. Such triangles were already used in ancient Egypt, where ropes with knots were used to form precise right angles for building structures (see figure \ref{fig:egyptian_rope_protractor}). This naturally leads to a number theoretic question: which positive rational number arises as the area of a rational right triangle? If a right angle traingle has sides $a,b,c \in \Q$ with $a^2+b^2=c^2$ then the area is $n = \frac{1}{2}ab.$ This motivates the following definition: a positive rational number $n$ is called a congruent number if it is the area of a rational right triangle. Thus studying the congruent number is equivalent to studying the rational solution of $$a^2+b^2=c^2, \quad \frac{1}{2}ab=n.$$ Since scaling a triangle by a factor c multiplies its area by $c^2$, congruent numbers are defined only up to multiplication by rational squares. Hence it suffices to restrict our attention to positive square-free integers. \\

In a recent work, \cite{Chahal} the author studied congruent numbers generated by rational right triangles circumscribing the unit circle and analyzed the congruent numbers arising from such configurations. Motivated by this, we first consider Heron triangles with prescribed angle $\theta$ tightly circumscribing for unit circle. This gives rise to a Diophantine equation characterizing such triangles and from which we parametrize the sides of the triangles (see Theorem \ref{thm-X-theta}). This framework connects with the theory of $\tau$-congruent number, which provides a generalization of the classical congruent number from right triangles to arbitrary fixed angle rational triangles with rational $\tau=\tan(\frac{\theta}{2})$.

In another direction, we study rational right triangles that tightly circumscribe ellipses of area $\pi$, equivalently the unit ellipse (see Theorem \ref{thm-ellipse-triangle}). After a suitable normalization, the problem reduces to studying rational right triangles tightly enclosing the unit circle (see Section \ref{Rational Right Triangles Circumscribing Ellipses}). This enables us to investigate the congruent numbers arising from such triangles. Also, we study rational right triangles inscribed in the unit circle (See Section \ref{Circumcircle}). We conclude this article with a study of congruent numbers arising from the excircle opposite a vertex of a rational right triangle (see Section~\ref{Excircles}).

\section{Rational triangles with a prescribed angle and \texorpdfstring{$\theta$}{lg}-congruent numbers}

\begin{figure}[ht]
\centering

\begin{tikzpicture}[scale=1.85]

\coordinate (C) at (-1.01,0);
\coordinate (A) at (1,0);
\coordinate (B) at (0,2.6);
\coordinate (O) at (0,0.7);

\draw[thick] (C)--(A)--(B)--cycle;

\def\r{0.675}
\draw[dashed] (O) circle (\r);

\coordinate (T1) at (0,-0.015);
\coordinate (T2) at (-0.6,1.0);
\coordinate (T3) at (0.6,1.0);

\draw[blue,thick] (O)--(T1);
\draw[blue,thick] (O)--(T2);
\draw[blue,thick] (O)--(T3);

\draw pic[draw, angle radius=5mm]
{right angle=O--T1--C};

\draw pic["$\theta$", draw=black, angle radius=8mm]
{angle=A--C--B};

\node[left] at (C) {$C$};
\node[right] at (A) {$A$};
\node[above] at (B) {$B$};
\node[above] at (O) {$O$};

\node[below right] at (T1) {$T_1$};
\node[left=4pt] at (T2) {$T_2$};
\node[right=2pt] at (T3) {$T_3$};

\draw[decorate,
decoration={brace,amplitude=6pt,mirror}]
(C) -- (T1)
node[midway,yshift=-12pt]
{$\cot(\theta/2)$};

\draw[decorate,
decoration={brace,amplitude=6pt,mirror}]
(T1) -- (A)
node[midway,yshift=-12pt]
{$x$};

\draw[decorate,
decoration={brace,amplitude=7pt}]
(C) -- (T2)
node[midway,xshift=-27pt]
{$\cot(\theta/2)$};

\draw[decorate,
decoration={brace,amplitude=6pt}]
(T2) -- (B)
node[midway,xshift=-12pt]
{$y$};

\draw[decorate,
decoration={brace,amplitude=6pt}]
(B) -- (T3)
node[midway,xshift=12pt]
{$y$};

\draw[decorate,
decoration={brace,amplitude=6pt}]
(T3) -- (A)
node[midway,xshift=12pt]
{$x$};

\end{tikzpicture}

\caption{A triangle with incircle of radius $1$ and $\angle ACB=\theta$.}
\label{fig:theta_triangle_incircle}

\end{figure}

Let $\triangle ABC$ be triangle with incircle of radius $1$ and suppose the angle at vertex $C$ i.e. the angle $\angle ACB =\theta$, such that $\tau=\tan(\frac{\theta}{2}) \in \mathbb{Q}_{>0}$. Let $O$ be the incentre. Let $T_1$ be the point where the incircle touches the side $CA$. Similarly, $T_{2}$ and $T_{3}$ are defined. Want to prove that $CT_1= \cot(\theta/2)=\tau^{-1}$. Since $O$ is the incentre, it lies on the angle bisector of $\angle ACB$, hence $\angle T_1CO = \theta/2$. Also since $T_1$ is the tangency point, the radius to the tangent is perpendicular to the tangent line i.e. $OT_1 \perp CA$. In the right angle triangle $COT_1$, we have $\angle T_1CO= \dfrac{\theta}{2}$,\;\; $OT_1=1$. Then, 
$$\tan\left(\dfrac{\theta}{2}\right)= \dfrac{OT_1}{CT_1}= \dfrac{1}{CT_1} \implies CT_1= \cot\left(\dfrac{\theta}{2}\right)=\tau^{-1} .$$ 

\noindent Similarly, we obtain
$CT_2=\cot(\theta/2).$ Let $x$ denote the length of the tangent drawn from the vertex $A$ to the incircle along the sides $AC$ and $AB$. Then $AT_1=AT_3=x.$
Likewise, let $y$ denote the length of the tangent drawn from the vertex $B$ to the incircle along the sides $BC$ and $BA$. Then $BT_2=BT_3=y.$ Hence, the parametrization of the sides of the triangle is 
$BC= y + \tau^{-1}, \quad AC= x+\tau^{-1}, \quad  AB= x+y.$

We begin with the curve $X_{\tau}$ defined by the Diophantine equation:
\begin{equation}\label{Diophantine_equ}
    X_{\tau}: xy = \tau(x+y)+1.
\end{equation} 

By a rational (resp. integer) point on $X_{\tau}$ we mean a point whose coordinates are rational (resp. integer).  Hence solving \eqref{Diophantine_equ} for $y$ we obtain:
\begin{equation}\label{equation_of y}
    y=\dfrac{\tau x+1}{x-\tau}.
\end{equation}
Thus $X_{\tau}$ is a rational curve of genus zero. Since, for example, the rational point $(0,-1/\tau)$ lies on $X_{\tau}$, the curve has a rational point, and hence infinitely many rational points. Indeed, for every rational number $x\neq \tau$, the equation \eqref{equation_of y} produces a rational point on $X_{\tau}$. The following Lemma determine the integer points on $X_{\tau}$.

\begin{lemma}
\label{rational-points-on-X-tau}
    Assume $\tau^{-1} \in \Z_{>0}$.  Then among the infinitely many rational points on $X_{\tau}$, the integers point i.e. $(x,y) \in \Z^2$ are obtained from the divisor of $\tau^{-2}+1$. More precisely, the integer points are exactly 
    $$\left(\tau(d+1), \tau\left(\dfrac{\tau^{-2}+1}{d}+1\right)\right), \quad \text{ where } d \mid (\tau^{-2}+1) \;\;\; \text{ and } \;\;\; d \equiv -1 \pmod {\tau^{-1}}. $$ 
 In particular, when $\tau=1$, the only integer points are $(0,-1), (-1,0),(2,3), (3,2).$
\end{lemma}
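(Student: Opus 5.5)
The approach is to clear denominators and complete the product, turning $X_\tau$ into a factorisation equation. Put $k=\tau^{-1}\in\Z_{>0}$. Multiplying \eqref{Diophantine_equ} by $k^2$ turns $xy=\tau(x+y)+1$ into $k^2xy-kx-ky=k^2$, that is,
\begin{equation*}
(kx-1)(ky-1)=k^2+1=\tau^{-2}+1 .
\end{equation*}
This identity is reversible: a pair $(x,y)\in\mathbb{Q}^2$ lies on $X_\tau$ if and only if it holds.

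For the forward direction, let $(x,y)\in\Z^2$ lie on $X_\tau$ and set $d=kx-1$. Then $d$ is an integer and $d\equiv -1 \pmod{k}$. By the displayed identity, $d$ divides $k^2+1$, and we have $ky-1=(k^2+1)/d$. Solving back gives
\begin{equation*}
x=\frac{d+1}{k}=\tau(d+1),\qquad y=\tau\left(\frac{\tau^{-2}+1}{d}+1\right),
\end{equation*}
which is exactly the stated form.

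For the converse, take any divisor $d$ of $k^2+1$ with $d\equiv-1\pmod k$. Then $x=\tau(d+1)$ is an integer by the congruence. For $y$ to be an integer we need $e=(k^2+1)/d\equiv -1 \pmod k$. This follows automatically: $de=k^2+1\equiv 1\pmod k$ and $d\equiv -1$, so $-e\equiv 1$, i.e. $e\equiv-1\pmod k$. Finally, the displayed identity shows the resulting pair lies on $X_\tau$. This gives a bijection between such divisors and the integer points.

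The main point to handle carefully is that $d$ must range over all divisors of $\tau^{-2}+1$, negative ones included. Otherwise points such as $(0,-1)$ are lost, so I will state explicitly that $d\in\Z$ is a signed divisor.

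The case $\tau=1$ is then immediate. Here $k=1$, so the congruence is vacuous and $d\in\{\pm1,\pm2\}$, the divisors of $2$. The values $d=1,2,-1,-2$ give the points $(2,3),(3,2),(0,-1),(-1,0)$ respectively.
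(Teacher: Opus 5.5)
Your proof is correct and follows the paper's argument. Both multiply the equation of $X_\tau$ by $\tau^{-2}$ to obtain $(x\tau^{-1}-1)(y\tau^{-1}-1)=\tau^{-2}+1$, then read off $x,y$ from a divisor $d$; you also make explicit two points the paper leaves implicit, namely that $(\tau^{-2}+1)/d\equiv -1 \pmod{\tau^{-1}}$ follows automatically from $d\equiv -1$, and that $d$ must range over signed divisors to recover $(0,-1)$ and $(-1,0)$.
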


\begin{proof}
    Multiply both side  of \eqref{Diophantine_equ}  by $\tau^{-2}$ and adding $1$ we have, $$(x\tau^{-1}-1)(y\tau^{-1}-1)=\tau^{-2}+1.$$ 
    Now if $(x,y)$ is an integer point then both $(x\tau^{-1}-1)$ and $(y\tau^{-1}-1)$ are divisor of $\tau^{-2}+1$. Let $x\tau^{-1}-1=d$ and $y\tau^{-1}-1 =\dfrac{\tau^{-2}+1}{d}$ for some $d \mid (\tau^{-2}+1).$ Hence $x= \tau(d+1)$, \;\; $y=\tau\left(\dfrac{\tau^{-2}+1}{d}+1\right)$. Hence $x, y \in \Z$ precisely when $d \equiv -1 \pmod{\tau^{-1}}$.
\end{proof}

\begin{theorem}
\label{thm-X-theta}
Let $X_{\tau}$ be the genus $0$ curve defined in \eqref{Diophantine_equ}, where $\tau \in \mathbb{Q}_{>0}$. Suppose that
\begin{equation}
\label{(a,b,c)=(x,y)}
 (a,\;b,\;c)=\left(y +\tau^{-1},\, x +\tau^{-1},\, x+y\right),   
\end{equation}
for a rational point $(x,y)$ on $X_{\tau}$. Then $(a,b,c)_{\theta}$ forms a Heron triangle having angle $\theta$ between the sides $a$ and $b$, and tightly circumscribing the unit circle, where $\tau=\tan\left(\frac{\theta}{2}\right).$
In particular, there exist infinitely many such triangles.
\end{theorem}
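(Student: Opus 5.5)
The plan is to avoid the law of cosines and instead work with the semiperimeter, since the parametrization \eqref{(a,b,c)=(x,y)} is exactly the tangent-length decomposition drawn in Figure~\ref{fig:theta_triangle_incircle}. First I would fix the implicit positivity hypothesis: we need $x,y>0$. If $x,y>0$ lie on $X_{\tau}$, then $xy=\tau(x+y)+1>\tau x$, so $y>\tau$, and symmetrically $x>\tau$. Conversely, by \eqref{equation_of y}, every rational $x>\tau$ gives a rational $y=(\tau x+1)/(x-\tau)>0$. So the admissible points are exactly the rational points with $x>\tau$, and there are infinitely many of them. This already settles the final ``in particular'' clause, once the main claim is shown, because distinct $x$ give distinct side $b=x+\tau^{-1}$.

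Next, set $s=\tfrac12(a+b+c)=x+y+\tau^{-1}$. Then
\[
s-a=x,\qquad s-b=y,\qquad s-c=\tau^{-1},
\]
and all three are positive. Hence the strict triangle inequalities hold and $(a,b,c)$ is a nondegenerate triangle with rational sides. Heron's formula gives $\Delta^2=s\cdot x\cdot y\cdot\tau^{-1}$. The defining equation \eqref{Diophantine_equ} can be rewritten as $xy=\tau\bigl(x+y+\tau^{-1}\bigr)=\tau s$. Substituting, $\Delta^2=s^2$, so $\Delta=s\in\mathbb{Q}_{>0}$ and the triangle is Heron.

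Then the inradius is $r=\Delta/s=1$, so the unit circle is the incircle, which is what ``tightly circumscribing'' means. For the angle, use the standard relation $\tan(C/2)=r/(s-c)$ at the vertex $C$ opposite $c$, i.e.\ between sides $a$ and $b$. This is the same right-triangle computation $CT_1=\cot(\theta/2)$ done before Lemma~\ref{rational-points-on-X-tau}, run backwards. It gives $\tan(C/2)=1/\tau^{-1}=\tau$. Since $0<C/2<\pi/2$ and $\tan$ is injective there, $C=\theta$. As a consistency check, $\Delta=\tfrac12 ab\sin\theta=ab\tau/(1+\tau^2)$, so $\Delta$ is a $\tau$-congruent number.

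The only real subtlety is the positivity/nondegeneracy step. The statement says ``a rational point'', but points such as $(0,-1/\tau)$ do not give triangles, so I would make explicit that $x,y>0$ (equivalently $x>\tau$) is required. The rest is the identity $xy=\tau s$, which makes Heron's formula collapse to $\Delta=s$.
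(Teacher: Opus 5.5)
Your proof is correct, and it takes a different route from the paper's.

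\textbf{What the paper does.} Its proof consists only of the law-of-cosines check $c^2=a^2+b^2-2ab\cos\theta$, with $\cos\theta=(1-\tau^2)/(1+\tau^2)$, using the curve equation twice. It does not verify in that proof that the unit circle is the incircle or that the area is rational. The incircle claim rests on the tangent-length picture drawn before the theorem, which runs in the direction triangle $\Rightarrow$ parametrization. The area $xy/\tau$ is only computed in the following theorem.

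\textbf{What you do instead.} You observe $s-a=x$, $s-b=y$, $s-c=\tau^{-1}$, and rewrite the curve as $xy=\tau s$. Heron's formula then collapses to $\Delta=s$, which agrees with the paper's $xy/\tau$. This one identity gives nondegeneracy, a rational area, and $r=1$. Via $\tan(C/2)=r/(s-c)$ it also gives $C=\theta$. So you actually prove the converse direction that the paper takes for granted.

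\textbf{The positivity hypothesis is a real correction.} The law-of-cosines identity cannot see the sign of $c$. For $\tau=1$, the point $(x,y)=(-\tfrac12,-\tfrac13)$ lies on $X_1$ and satisfies the identity, yet it gives $(a,b,c)=(\tfrac23,\tfrac12,-\tfrac56)$. The right triangle with sides $\tfrac23,\tfrac12,\tfrac56$ has the unit circle as its $c$-excircle, not its incircle. Indeed, this branch is the paper's curve $X_c$ under $(x,y)\mapsto(-x,-y)$.

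\textbf{Trade-off.} The only thing the paper's route offers in exchange is a direct check of the included angle, without appealing to the half-angle/tangent-length relation.
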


\begin{proof}
    %From \eqref{Side of triangle} it is clear that $(a,b,c)=(x+t, \,y+t, \,x+y) ~ \text{~ where ~} t=\cot\left(\dfrac{\theta}{2}\right).$
    Suppose that the triple $(a,b,c)$ as in \eqref{(a,b,c)=(x,y)}. By law of cosine it is enough to show that 
    $c^2=a^2+b^2-2ab\cos(\theta),$ where $\tau=\tan (\frac{\theta}{2})$. Note that, $\cos (\theta)= \frac{1-\tan^{2} (\frac{\theta}{2})}{1+\tan^{2} (\frac{\theta}{2})}=\frac{1-\tau^2}{1+\tau^2}.$
    Therefore,
    \begin{align*}
       a^2+b^2-2ab\cos(\theta)& = (y+ \tau^{-1})^2+(x+\tau^{-1})^2-2(y+ \tau^{-1})(x+\tau^{-1})\cdot \frac{1-\tau^2}{1+\tau^2}\\
       &=(y+ \tau^{-1})^2+(x+\tau^{-1})^2-\frac{2}{\tau^2} (\tau^2xy+\tau(x+y)+1) \cdot  \frac{1-\tau^2}{1+\tau^2}\\
       &= \frac{1}{\tau^2}\left[(\tau y+1)^2+(\tau x+ 1)^2-2xy(1-\tau^2)\right] \qquad \qquad \qquad \qquad (\text{by \eqref{Diophantine_equ}})\\
       &= \frac{1}{\tau^2}\left[\tau^2y^2+\tau^2x^2+2xy\tau^2+2(\tau(x+y)+1-xy)\right]\\
       &=(x+y)^2=c^2   \qquad \qquad\qquad\qquad\qquad \qquad\qquad\qquad\qquad\qquad (\text{by \eqref{Diophantine_equ}}).
    \end{align*}
 Finally, since $X_{\tau}$ is a genus $0$ rational curve with a rational point, it has infinitely many rational points. Hence \eqref{(a,b,c)=(x,y)} produces infinitely many rational triples $(a,b,c)$, and therefore infinitely many Heron triangles tightly circumscribing the unit circle with angle $\theta$.
\end{proof}

\begin{theorem}
Let $\theta\in(0,\pi)$ be such that  $\tau=\tan\left(\frac{\theta}{2}\right)\in\mathbb{Q}_{>0}.$ 
%$$\sin\theta=\frac{2\tau}{1+\tau^2}, \quad \cos\theta=\frac{1-\tau^2}{1+\tau^2}.$$
Suppose that $x,y\in\mathbb{Q}$ satisfy \eqref{Diophantine_equ}, and define
$a=y+\tau^{-1},\;\; b=x+\tau^{-1},\;\; c=x+y.$
Then $(a,b,c)_{\theta}$ is a Heron triangle with angle $\theta$ between the sides $a$ and $b$. Furthermore, the corresponding $\tau$-congruent number is given by $n=\tau xy.$
\end{theorem}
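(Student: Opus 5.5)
The plan has two parts: show that $(a,b,c)_\theta$ is a Heron triangle, then compute its area and bring it to the form $\tau xy$.

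\textbf{Heron property.} This part is already settled by Theorem \ref{thm-X-theta}. For a rational point $(x,y)$ on $X_\tau$, the triple $a=y+\tau^{-1}$, $b=x+\tau^{-1}$, $c=x+y$ satisfies the law of cosines with $\cos\theta=\frac{1-\tau^2}{1+\tau^2}$. So $(a,b,c)_\theta$ is a rational triangle with angle $\theta$ between $a$ and $b$. Its area is $\frac12 ab\sin\theta=\frac{ab\tau}{1+\tau^2}$, which is rational because $\tau\in\mathbb{Q}_{>0}$. Hence the triangle is Heron, and only the formula for $n$ needs work.

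\textbf{Computing the area.} I would use the incircle, since by construction the triangle tightly circumscribes the unit circle. Then the area equals $r\cdot s=s$, where $s$ is the semiperimeter:
\begin{equation*}
s=\tfrac12(a+b+c)=x+y+\tau^{-1}.
\end{equation*}
The curve equation \eqref{Diophantine_equ} gives $x+y=(xy-1)/\tau$, so $s=xy/\tau$.

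As a cross-check, the same value comes from the sine formula. Expanding gives $ab=xy+\tau^{-1}(x+y)+\tau^{-2}$. Substituting for $x+y$ turns this into $ab=xy(1+\tau^2)/\tau^2$, and multiplying by $\tau/(1+\tau^2)$ again gives $xy/\tau$.

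\textbf{Passing from $xy/\tau$ to $\tau xy$.} This normalization is the step I expect to be the main obstacle. The area computed directly is $xy\,\tau^{-1}$, and the statement asks for $\tau xy$. The two agree exactly when $\tau=1$, which is the classical case. In general their ratio is $\tau^2$, a nonzero rational square.

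The reconciliation is by scaling. Multiply all three sides by $\tau\in\mathbb{Q}_{>0}$ to get $(\tau a,\tau b,\tau c)_\theta$. This is again a rational triangle with the same angle $\theta$ between the first two sides, and its area is $\tau^2\cdot xy/\tau=\tau xy$. Since $\tau$-congruence is a property of rational triangles with angle $\theta$, taken up to rational similarity (equivalently, $n$ up to square classes), the $\tau$-congruent number attached to $(a,b,c)_\theta$ is $n=\tau xy$.

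In the write-up I would present the two area identities first, then the rescaling. I would also state explicitly that $n$ is read modulo $(\mathbb{Q}^{\times})^2$, the same convention under which congruent numbers are taken squarefree. That makes clear the factor $\tau^2$ changes only the representative, not the class.
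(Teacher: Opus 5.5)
Your proof is correct, and your ``cross-check'' is exactly the paper's argument. The paper computes $n=\tfrac12 ab\sin\theta$, expands $ab$, and uses $(x+y)\tau=xy-1$ from \eqref{Diophantine_equ} to reach $xy/\tau$. It then writes $xy/\tau\equiv\tau xy \pmod{\mathbb{Q}^{\times 2}}$, and your rescaling of the sides by $\tau$ is an explicit realization of that last step.

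The real difference is your primary route, $\text{area}=rs=s=xy/\tau$. It is quicker, but it presupposes that the inradius is $1$. Theorem~\ref{thm-X-theta} asserts this, but its proof only verifies the law of cosines. If you lead with $rs$, you should justify $r=1$ directly: the tangent length from the vertex between $a$ and $b$ is $s-c=\tau^{-1}$, so $r=(s-c)\tan(\theta/2)=1$. Alternatively, present the sine computation first and note that the area equals $s$, which then \emph{proves} $r=1$. The sine route is the self-contained one, so the $rs$ identity sits more naturally as a consequence than as the main argument.

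One caveat applies to both you and the paper. Since $(x\tau^{-1}-1)(y\tau^{-1}-1)=\tau^{-2}+1>0$, every rational point has either $x,y>\tau$ or $x,y<\tau$. On the second branch, write $u=1-x\tau^{-1}$ and $v=1-y\tau^{-1}$, both positive with $uv=1+\tau^{-2}$. AM--GM gives $u+v\ge 2\sqrt{1+\tau^{-2}}>2$, so $c=x+y=\tau(2-u-v)<0$; for instance, $(0,-\tau^{-1})$ gives $a=0$. So the claim that $(a,b,c)_\theta$ is a triangle needs the extra hypothesis $x>\tau$. The paper uses this tacitly in its table of examples, and you should state it when invoking Theorem~\ref{thm-X-theta}.
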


\begin{proof}
 Here, $\tau=\tan\left(\frac{\theta}{2}\right),$
and hence $\sin(\theta)=\frac{2\tau}{1+\tau^2}.$
 By \eqref{(a,b,c)=(x,y)}, the triple $(a,b,c)$ determines a Heron triangle with angle $\theta$ between the sides $a$ and $b$. Hence the associated $\tau$-congruent number is $n=\frac{ab\sin(\theta)}{2}.$ Therefore, we have
\begin{align*}
    n &=\frac{1}{2}(y+\tau^{-1})(x+\tau^{-1})\cdot \frac{2\tau}{1+\tau^2} \\
    &= \dfrac{(x\tau+1)}{\tau}\cdot\dfrac{(y\tau+1)}{\tau} \cdot \dfrac{\tau}{1+\tau^2}\\
    &= \dfrac{xy\tau^2+(x+y)\tau +1}{\tau(1+\tau^2)}\\
    &= \dfrac{xy\tau^2+xy-1+1}{\tau(1+\tau^2)} \qquad \qquad (\text{by \eqref{Diophantine_equ}})\\
    &= \dfrac{xy}{\tau} = \tau x y \pmod {\Q^{\times 2}}.
\end{align*}
Hence we are done.
% Using the Diophantine equation \eqref{Diophantine_equ}, this simplifies to
% $n=\tau xy.$
\end{proof}

\begin{remark}
    When $\theta=\frac{\pi}{2}$ (i.e. $\tau=1$), this construction reduces to the classical congruent number case studied in \cite{Chahal}, where the rational points on the curve $xy=x+y+1$ parametrize right triangles subscribing the unit circle and recover the usual congruent number correspondence. Thus our results generalize this framework to arbitrary angles $\theta$.
\end{remark}

\begin{table}[h]
\centering
\caption{Some $\tau$- congruent numbers}
\label{tab:theta congruent_numbers}
\renewcommand{\arraystretch}{1.1} 
\setlength{\tabcolsep}{18pt}      
\begin{tabular}{ccccc}
\hline
$\tau$ & $x>\tau$ & $y=\dfrac{\tau x+1}{x-\tau}$ & $n=\tau\cdot x \cdot y$ & $n \pmod{\mathbb{Q}^{\times 2}}$ \\
\hline
$1/2$ & $2$ & $4/3$ & $4/3$ & 3 \\
$1$ & $2$  & $3$  & $6$  & $6$ \\
$3/2$ & $2$ & $8$ & $24$ & $6$ \\
$2$ & $3$ & $7$ & $42$ & $42$ \\
%$1/3$ & $1$ & $2$ & $2/3$ & $6$ \\
$3$ & $4$ & $13$ & $156$ & $39$ \\
\hline
\end{tabular}
\end{table}

\section{Rational Right Triangles Circumscribing Ellipses of Area \texorpdfstring{$\pi$}{pi}}
\label{Rational Right Triangles Circumscribing Ellipses}

In this section, we generalize the problem of rational right triangles that enclose the unit circle to ellipses of area $\pi$, in short unit ellipse. Let 
\begin{equation}
\label{Ellipse-Ea}
 E_a:\frac{(x-a)^2}{a^2}+a^2\left(y-\frac{1}   {a}\right)^2=1,\qquad a\in \mathbb{Q}^{\times}.
\end{equation}

Then $E_a$ is an ellipse centered at $(a,\, \frac{1}{a})$, with semi axes $a$ and $\frac{1}{a}$ respectively. Hence, the area of $E_a$ is $\pi \cdot a \cdot \frac{1}{a} =\pi.$ Note that the coordinate axes $x=0$ and $y=0$ are tangent to $E_a$ at $\left(0,\frac{1}{a}\right)  \text{ and }  (a,0),$ respectively. Consequently, $E_a$ lies entirely in the first quadrant. Since the coordinate axes are tangent to $E_a$, it is natural to consider right triangles whose legs lie along these axes. Accordingly, let $\Delta$ be a rational right triangle with vertices $ (0,0),\, (u,0),\, (0,v),$
circumscribing $E_a$. Now consider the linear transformation
\begin{equation}
\label{Map-Ta}
   T_a:\mathbb{R}^2\to\mathbb{R}^2, \qquad T_a(x,y)=\left(\frac{x}{a},\,ay\right). 
\end{equation}
Geometrically, $T_a$ compresses the $x$-coordinate by a factor of $a$ and stretches the $y$-coordinate by the same factor. Since
$\det(T_a)=\frac1a\cdot a=1,$ the transformation preserves area.

\begin{figure}[ht]
\begin{center}
\resizebox{\textwidth}{!}{
\begin{tikzpicture}

\def\u{4.2}
\def\v{2.97}
\def\a{0.98}

\begin{scope}[scale=1.3]

\coordinate (O) at (0,0);
\coordinate (A) at (\u,0);
\coordinate (B) at (0,\v);

\draw[thick] (O)--(A)--(B)--cycle;
\draw (0,0) rectangle (0.15,0.15);

\draw[blue, thick]
    (0.99,1.03) ellipse ({\a} and {1/\a});

\node[left] at (B) {$(0,v)$};
\node[below] at (A) {$(u,0)$};
\node[below left] at (O) {$(0,0)$};
\node[blue] at (0.9,0.55) {$E_a$};x, y
\fill (0.99,1.03) circle (1.5pt);
\node[above right] at (0.99,1.03) {$(a,\tfrac{1}{a})$};
\fill (0.99,0) circle (1.5pt);
\node[below] at (0.99,0) {$(a,0)$};

\fill (0,1.03) circle (1.5pt);
\node[left] at (0,1/\a) {$(0,\tfrac1a)$};

\end{scope}

\draw[->, very thick] (4.2,2.0) -- (8.0,2.0);
\node at (6.1,2.6) {$T_a(x,y)=\left(\dfrac{x}{a},\,ay\right)$};

\begin{scope}[shift={(9.5,0)}, scale=1.3]

\coordinate (O2) at (0,0);
\coordinate (A2) at ({\u/\a},0);
\coordinate (B2) at (0,{\a*\v});

\draw[thick] (O2)--(A2)--(B2)--cycle;
\draw (0,0) rectangle (0.15,0.15);

\draw[red!75!black, thick]
    ({0.99/\a},{1.03*\a}) circle (1);

\node[left] at (B2) {$(0,av)$};
\node[below] at (A2) {$\left(\frac{u}{a},0\right)$};
\node[below left] at (O2) {$(0,0)$};
\node[red!75!black] at ({2.5/\a},{1.05*\a-0.45}) {$T_a(E_a)$};
\fill ({0.99/\a},{1.03*\a}) circle (1.5pt);
\node[above right] at ({0.99/\a},{1.03*\a}) {$(1,1)$};
\fill (1,0) circle (1.5pt);
\node[below] at (1,0) {$(1,0)$};

\fill (0,1) circle (1.5pt);
\node[left] at (0,1) {$(0,1)$};
\draw[dashed,gray] (1,1) -- (1,0);
\draw[dashed,gray] (1,1) -- (0,1);
\end{scope}

\end{tikzpicture}
}
\end{center}
\caption{The affine map $T_a(x,y)=\left(\frac{x}{a},\,ay\right)$ sends the ellipse $E_a$ centered at $(a,\tfrac1a)$ to the circle $(X-1)^2+(Y-1)^2=1$ centered at $(1,1)$.}
\label{Ellipse affine transformation}

\end{figure}

%Now let $(X,Y)=T_a(x,y)$. Then $X=\frac{x}{a},\quad Y=ay,$
%and hence $$(X-1)^2+(Y-1)^2=\left(\frac{x}{a}-1\right)^2+(ay-1)^2= \frac{(x-a)^2}{a^2}+a^2\left(y-\frac{1}{a}\right)^2.$$
Therefore,
$$(x,y)\in E_a \iff \frac{(x-a)^2}{a^2}+a^2\left(y-\frac{1}{a}\right)^2=1 \iff (X-1)^2+(Y-1)^2=1,$$
where $X=\frac{x}{a},\; Y=ay$. Thus, $T_a$ sends the ellipse $E_a$ bijectively onto the circle of radius $1$ centered at $(1,1)$.
Consequently, any rational right triangle  circumscribing $E_a$ is transformed into a right triangle  circumscribing the translated unit circle (see figure \ref{Ellipse affine transformation}). Then under the transformation $T_a$, the vertices of $\triangle$ are mapped to $
(0,0),\, \left(\frac{u}{a},0\right),\, (0,av).$ Thus, it is again a right triangle  circumscribing the translated unit circle. Moreover,
$\frac{1}{2}\cdot\frac{u}{a} \cdot av=\frac{1}{2}uv,$ so the area of the triangle is preserved. The following Lemma shows that it is indeed a rational right triangle.
%$$T_a(\triangle)=\left\{(0,0),\left(\frac{u}{a},0\right),(0,av)\right\}$$ is  \\

\begin{lemma}
\label{right-triangle-Ta}  
Under the transformation $T_a$ defined in \eqref{Map-Ta}, the rational right triangle with vertices $(0,0)$, $(u,0)$, and $(0,v)$ circumscribing the ellipse $E_a$ given in \eqref{Ellipse-Ea} is transformed into the rational right triangle with vertices
$(0,0), (\frac{u}{a},0), \text{ and } (0,av),$
circumscribing the unit circle centred at $(1,1)$.

\end{lemma}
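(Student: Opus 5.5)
We need to show three things: that $T_a(\Delta)$ is a right triangle with vertices $(0,0)$, $(u/a,0)$, $(0,av)$; that it circumscribes the circle $(X-1)^2+(Y-1)^2=1$; and that its side lengths are rational.

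First, the vertices. $T_a$ is linear, so it maps segments to segments and triangles to triangles. Evaluating at the vertices gives $T_a(0,0)=(0,0)$, $T_a(u,0)=(u/a,0)$ and $T_a(0,v)=(0,av)$. The image triangle therefore has its legs on the coordinate axes and a right angle at the origin. Its legs are $u/a$ and $av$, which are rational because $u,v,a\in\mathbb{Q}$ and $a\neq 0$.

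Second, tangency. $T_a$ is an invertible affine map, so it preserves incidence and tangency. A line meets $E_a$ in exactly one point if and only if its image meets $T_a(E_a)$ in exactly one point. Likewise, since $T_a$ is a homeomorphism, a point lies inside $\Delta$ exactly when its image lies inside $T_a(\Delta)$. By the computation preceding the lemma, $T_a(E_a)$ is the circle $(X-1)^2+(Y-1)^2=1$. Hence each side of $T_a(\Delta)$ is tangent to this circle and the circle lies inside the triangle. One can also check the axes directly: $X=0$ and $Y=0$ are tangent at $(0,1)$ and $(1,0)$. The hypotenuse, the line $X/(u/a)+Y/(av)=1$, is tangent as the image of the tangent hypotenuse $x/u+y/v=1$.

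Third, rationality of the hypotenuse. This is the main obstacle, since affine maps do not preserve lengths. We need $(u/a)^2+(av)^2$ to be a rational square. I would use the inradius formula for a right triangle. The image triangle has incircle of radius $1$, so
\[
1=\frac{\frac{u}{a}+av-h}{2},\qquad\text{hence}\qquad h=\frac{u}{a}+av-2\in\mathbb{Q},
\]
where $h$ is the image hypotenuse. Equivalently, the triangle's area $\frac12 uv$ equals $r\cdot s$ with $r=1$, so the semiperimeter $s$ is rational, and then $h=2s-u/a-av$ is rational. This uses only tangency, which was established in the second step, and not the rationality of the original hypotenuse.

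The triangle is then a rational right triangle of area $\frac12 uv$ with incircle the unit circle centred at $(1,1)$, which is the claim of the lemma.
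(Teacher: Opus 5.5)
Your proof is correct. It has the same overall structure as the paper's: locate the image vertices, transfer tangency through $T_a$, then show the hypotenuse is rational. The difference is in how you get the hypotenuse.

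The paper writes the hypotenuse as $\frac{a}{u}X+\frac{1}{av}Y-1=0$ and requires its distance from $(1,1)$ to equal $1$. Clearing denominators gives $\sqrt{(u/a)^2+(av)^2}=\left|\frac{u}{a}+av-uv\right|$.

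You instead use the incircle identities for a right triangle with legs $p=u/a$, $q=av$ and inradius $r=1$: either $h=p+q-2r$ or $\tfrac12 pq=rs$.
\begin{itemize}
\item Your area version gives $h=uv-\frac{u}{a}-av$. This is exactly the paper's expression, with the sign inside the absolute value fixed.
\item Your tangent-length version gives $h=\frac{u}{a}+av-2$. The two agree because tangency forces $(p-2)(q-2)=2$.
\end{itemize}

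Your route avoids the square-root algebra and the absolute value. You also justify explicitly that $T_a$ preserves tangency and interiors, which the paper leaves to ``the above discussion''. The price is that you need the circle to be the \emph{incircle}, lying inside the triangle. The paper's distance computation uses only that the hypotenuse line is tangent to the circle, so it would equally cover the configuration where the circle centred at $(1,1)$ is the $c$-excircle.
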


\begin{proof}
In view of the above discussion, it suffices to show that the hypotenuse of the transformed right triangle is rational. Observe that, the hypotenuse of transformed triangle is the line joining $\left(\frac{u}{a},0\right)$ and $(0,av)$, whose equation is
\begin{equation}
\label{Hypotenuse equation}
\frac{x}{u/a}+\frac{y}{av}=1.
\end{equation}
By the definition of $T_a$ as in \eqref{Map-Ta}, the ellipse $E_a$ is transformed into the unit circle $(X-1)^2+(Y-1)^2=1$ centred at $(1,1)$. Since the original triangle $\Delta$ circumscribes $E_a$, the transformed triangle $T_a(\Delta)$ circumscribes this circle. Therefore the line \eqref{Hypotenuse equation} is tangent to the circle. Now the perpendicular distance from the centre $(1,1)$ to the tangent line \eqref{Hypotenuse equation} must equal the radius $1$. Writing the line \eqref{Hypotenuse equation} in the form $\frac{a}{u}x+ \frac{1}{av}y-1=0$, the distance formula gives $$\dfrac{\left|\frac{a}{u}+\frac{1}{av}-1\right|}
{\sqrt{\left(\frac{a}{u}\right)^2+\left(\frac{1}{av}\right)^2}} =1 \implies \sqrt{\left(\frac{u}{a}\right)^2+(av)^2}=\left|\frac{u}{a}+av-uv\right|
\in \mathbb{Q}.$$
 Hence the transformed triangle is a rational right triangle with vertices $(0,0), (\frac{u}{a},0), \text{ and } (0,av)$  circumscribing the unit circle centred at $(1,1)$.
\end{proof}

By the classification of rational right triangles circumscribing the unit circle \cite{Chahal}, there exists a rational point $(x,y)$ satisfying
$xy=x+y+1$ such that 
$\frac{u}{a}=x+1,\; av=y+1.$
Hence
$u=a(x+1),\;\; v=\frac{y+1}{a}.$ Conversely, given any rational point $(x,y)$ on the curve $xy=x+y+1$, the triangle with vertices $(0,0),\ (a(x+1),0),\ \left(0,\frac{y+1}{a}\right)$ is a rational right triangle circumscribing $E_a$. Thus rational right triangles circumscribing $E_a$ are parametrized by rational points on the same genus zero curve $xy=x+y+1$. Since $$ xy=x+y+1 \iff (x-1)(y-1)=2,$$
we parametrize all rational solutions by $x=1+t,~ y=1+\frac{2}{t},~  t\in\mathbb Q^\times.$ Substituting into the above formulas gives $
u=a(t+2), ~  v=\frac{1}{a}\left(2+\frac{2}{t}\right),$
which yields an explicit one parameter family of rational right triangles circumscribing $E_a$. Finally, since the transformation $T_a$ preserves area, the area of such a triangle is $$ \mathcal{A}=\frac{1}{2}uv =\frac{1}{2}\cdot a(t+2)\cdot \frac{2}{a}\left(1+\frac{1}{t}\right)=\frac{(t+2)(t+1)}{t}.$$
%which simplifies to $$\mathcal{A}=\frac{(t+2)(t+1)}{t}.$$
Thus, modulo $\mathbb{Q}^{\times 2}$, the congruent number generated by the triangle is
$\mathcal{A}\equiv t(t+1)(t+2).$ Therefore, the family of rational right triangles circumscribing the ellipse $E_{a}$
%$$E_a:\frac{(x-a)^2}{a^2}+a^2\left(y-\frac{1}{a}\right)^2=1$$
produces exactly the same congruent numbers (up to square factors) as the family of rational right triangles circumscribing the translated unit circle. Hence, we have the following theorems:

\begin{theorem}
\label{thm-ellipse-triangle}
Let $E_a$ be the ellipse defined in \eqref{Ellipse-Ea}, where $a\in \mathbb{Q}^{\times}$, so that $E_a$ has area $\pi$. A right triangle with rational vertices $(0,0)$, $(u,0)$, and $(0,v)$ circumscribes $E_a$ if and only if there exists a rational point $(x,y)$ on the curve
$xy=x+y+1,$ such that $u=a(x+1), \;\; v=\frac{y+1}{a}.$

\end{theorem}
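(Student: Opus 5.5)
The plan is to reduce both directions to the unit-circle case via the area-preserving map $T_a$ of \eqref{Map-Ta}, and then use the classification of rational right triangles circumscribing the unit circle. For a right triangle with legs on the axes and the circle $(X-1)^2+(Y-1)^2=1$ inscribed, the tangent lengths from the right-angle vertex are $1$. So the legs have the form $x+1$ and $y+1$, where $x,y$ are the tangent lengths from the other two vertices, and the hypotenuse is $x+y$. This is the $\tau=1$ case of Theorem \ref{thm-X-theta}.

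\emph{Forward direction.} Suppose the rational right triangle $\Delta$ with vertices $(0,0),(u,0),(0,v)$ circumscribes $E_a$.

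1. By Lemma \ref{right-triangle-Ta}, $T_a(\Delta)$ has vertices $(0,0),(u/a,0),(0,av)$, circumscribes the circle centred at $(1,1)$ of radius $1$, and has rational hypotenuse $h=\left|\frac{u}{a}+av-uv\right|$.
2. Set $x=\frac{u}{a}-1$ and $y=av-1$, which are rational. I will verify $xy=x+y+1$ directly rather than cite \cite{Chahal} as a black box.
3. The tangency condition from the lemma gives $\left(\frac{u}{a}\right)^2+(av)^2=\left(\frac{u}{a}+av-uv\right)^2$. Write $p=u/a$ and $q=av$, so $pq=uv$ and the condition reads $p^2+q^2=(p+q-pq)^2$.
4. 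Expanding gives $0=pq\,(pq-2p-2q+2)$. Since $pq\neq0$, we get $(p-2)(q-2)=2$.
5. Substituting $p=x+1$ and $q=y+1$ turns this into $(x-1)(y-1)=2$, which is equivalent to $xy=x+y+1$. Hence $u=a(x+1)$ and $v=\frac{y+1}{a}$.

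\emph{Converse.} Given a rational point $(x,y)$ on $xy=x+y+1$, set $u=a(x+1)$ and $v=(y+1)/a$, both rational.

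1. Reversing the algebra shows the line through $(u/a,0)$ and $(0,av)$ is at distance $1$ from $(1,1)$. The axes are tangent to the circle automatically.
2. Applying $T_a^{-1}$, which is linear and bijective, carries tangent lines to tangent lines and the circle back to $E_a$. So the triangle $(0,0),(u,0),(0,v)$ has all three sides tangent to $E_a$.

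The main obstacle is making sure \emph{circumscribes} means the ellipse is the inscribed conic, not an excircle-type configuration. A line at distance $1$ from $(1,1)$ could also be tangent on the far side.

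1. To rule this out, I require $u/a>0$, $av>0$ and $h=p+q-pq>0$ (equivalently $x,y>0$), so that the circle lies inside the transformed triangle. This is the positivity implicit in \cite{Chahal}'s parametrization.
2. Under this convention the tangency equation is exactly the incircle condition, and the equivalence holds on the branch $x,y>0$ of the curve.
3. I will note that $T_a$ preserves the positions of the three vertices relative to the first quadrant. Hence $u/a>0$ and $av>0$ are equivalent to $\Delta$ having positive legs in the first quadrant, which is the setting of the theorem.
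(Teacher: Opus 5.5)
Your reduction via $T_a$ is the paper's route. Your direct computation $p^2+q^2=(p+q-pq)^2 \iff (p-2)(q-2)=2$ is correct and replaces the paper's appeal to \cite{Chahal}. You are also right that the converse needs a positivity restriction, which the paper leaves implicit, because tangency to the three side-lines does not distinguish an inscribed circle from an excircle. But the restriction you impose has the wrong sign.

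With $p=u/a>0$ and $q=av>0$, the centre $(1,1)$ lies on the same side of the hypotenuse $\frac{X}{p}+\frac{Y}{q}=1$ as the origin exactly when $\frac1p+\frac1q<1$, i.e.\ $pq-p-q>0$. Since $p+q-pq=1-xy$, the inscribed case ($x,y>0$, so $xy=x+y+1>1$) has $p+q-pq<0$ and $h=pq-p-q=xy-1=x+y$.

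Conversely, take $(p-2)(q-2)=2$ with $p,q>0$ and $p+q-pq>0$. Then $p,q>2$ is impossible, since it would give $pq-p-q=(p-1)(q-1)-1>0$. So $p,q<2$, and $2-p=\frac{2}{2-q}>1$ gives $0<p,q<1$, i.e.\ $-1<x,y<0$. This is exactly the $c$-excircle configuration of Section~\ref{Excircles}. For instance, $(x,y)=(-\tfrac12,-\tfrac13)$ lies on $xy=x+y+1$ and gives legs $\tfrac12,\tfrac23$, hypotenuse $\tfrac56=p+q-pq$ and semiperimeter $1=r_c$; the circle centred at $(1,1)$ lies outside this triangle. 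So your claimed equivalence ``$p+q-pq>0$ iff $x,y>0$'' is false: the two conditions are mutually exclusive. As written, your convention selects precisely the configuration you meant to exclude.

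The fix is to require $u/a>0$, $av>0$ and $uv-\frac{u}{a}-av>0$, equivalently $x,y>0$. On $(x-1)(y-1)=2$ this forces $x,y>1$, i.e.\ $p,q>2$. With this, your forward computation and the converse via $T_a^{-1}$ go through.

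A smaller point: for $a<0$ the map $T_a$ exchanges the first and third quadrants, and $E_a$ then lies in the third quadrant. So $u/a>0$, $av>0$ is still the right condition, but it is not equivalent to $\Delta$ lying in the first quadrant.
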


\begin{corollary}
    The area of a rational right triangle circumscribing $E_a$ depends only on the associated rational point on $xy=x+y+1, $
    and is independent of $a$.
\end{corollary}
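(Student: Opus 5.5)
The plan is a direct computation that feeds the parametrization of Theorem \ref{thm-ellipse-triangle} into the area formula. Let $\Delta$ be a rational right triangle with vertices $(0,0)$, $(u,0)$, $(0,v)$ circumscribing $E_a$. By Theorem \ref{thm-ellipse-triangle} there is a rational point $(x,y)$ on $xy=x+y+1$ with $u=a(x+1)$ and $v=\frac{y+1}{a}$. Substituting gives
$$\mathcal{A}(\Delta)=\tfrac12 uv=\tfrac12\, a(x+1)\cdot\frac{y+1}{a}=\tfrac12 (x+1)(y+1).$$
The factor $a$ cancels, so $\mathcal{A}$ is a function of $(x,y)$ alone, which is the claim.

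For a more intrinsic explanation, I would also note that $T_a$ in \eqref{Map-Ta} has determinant $1$. By Lemma \ref{right-triangle-Ta}, $T_a(\Delta)$ is the right triangle with legs $x+1$ and $y+1$ circumscribing the circle centred at $(1,1)$. Since $T_a$ preserves area, $\mathcal{A}(\Delta)=\mathcal{A}(T_a(\Delta))$, and the right-hand side involves only the point $(x,y)$.

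It remains to check that ``the associated rational point'' is well defined, so that the statement is not vacuous. For a fixed $a$, the formulas $x=u/a-1$ and $y=av-1$ recover $(x,y)$ uniquely from $\Delta$. In the other direction, a fixed point $(x,y)$ gives one triangle for each $a$, and all of these triangles have the same area. Using the relation $(x+1)(y+1)=xy+x+y+1=2(x+y+1)$ on the curve, the area can be written as $x+y+1=xy$. With $x=1+t$ this matches the expression $\frac{(t+1)(t+2)}{t}$ computed above.

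None of these steps is a real obstacle. The only point needing care is the well-definedness of the correspondence, which I take from Theorem \ref{thm-ellipse-triangle}.
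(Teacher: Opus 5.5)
Your proposal is correct and follows essentially the paper's own reasoning. You substitute $u=a(x+1)$ and $v=\frac{y+1}{a}$ into $\frac12 uv$ so that $a$ cancels (equivalently, you use that $T_a$ has determinant $1$), which the paper does in the parameter $t$ to get $\frac{(t+1)(t+2)}{t}$; your rewriting of the area as $xy$ via $(x+1)(y+1)=2(x+y+1)$ agrees with that formula.
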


\begin{theorem} Suppose that $E_{a}$ is as in \eqref{Ellipse-Ea}.
\begin{itemize}
    \item[(a)] For every $a\in\mathbb Q^\times$, the set of rational right triangles circumscribing $E_a$ is in bijection with the rational points on the genus zero curve
   $xy=x+y+1.$ This bijection preserves area.
   \item[(b)] Up to rational equivalence, the congruent numbers obtained from triangles circumscribing $E_a$ are exactly the same as those arising from triangles circumscribing the unit circle.
   \item[(c)] A rational right triangle circumscribing $E_a$ has side lengths
$(u,\; v,\; \frac{u}{a}+av-uv),$
where $u$ and $v$ are the legs adjacent to the right angle $\pi/2$. In particular, when $a=1$, a rational right triangle circumscribing the unit circle has side lengths $(u,\; v,\; u+v-uv).$
\item[(d)] Product of three consecutive positive integers is a congruent number.
\end{itemize}
    
\end{theorem}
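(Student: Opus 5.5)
The plan is to assemble the four parts from Theorem~\ref{thm-ellipse-triangle}, Lemma~\ref{right-triangle-Ta}, and the explicit parametrization $x=1+t$, $y=1+\frac{2}{t}$ of the curve $xy=x+y+1\iff(x-1)(y-1)=2$ derived just before the statement.

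\emph{Part (a).} I will define the map
$$(x,y)\longmapsto \Delta_{(x,y)}=\Bigl\{(0,0),\ \bigl(a(x+1),0\bigr),\ \bigl(0,\tfrac{y+1}{a}\bigr)\Bigr\}$$
from rational points of $xy=x+y+1$ with $x,y>0$ (the points giving genuine triangles) to rational right triangles circumscribing $E_a$.
\begin{itemize}
\item Well-definedness and surjectivity are exactly the two directions of Theorem~\ref{thm-ellipse-triangle}.
\item Injectivity is immediate, since $x=\frac{u}{a}-1$ and $y=av-1$ are recovered from the vertices.
\item For area preservation I will use $\det T_a=1$: the area $\frac12 uv=\frac{(x+1)(y+1)}{2}$ does not involve $a$. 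This also gives the Corollary.
\end{itemize}

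\emph{Part (b).} Substituting $x=1+t$, $y=1+\frac{2}{t}$ gives area $\mathcal A=\frac{(t+1)(t+2)}{t}$. This is independent of $a$, and multiplying by $t^2$ gives $\mathcal A\equiv t(t+1)(t+2)\pmod{\mathbb Q^{\times2}}$. Taking $a=1$ recovers the unit-circle case of \cite{Chahal}. Hence the two sets of square classes of areas coincide.

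\emph{Part (c).} This is the step needing care, since the third side of the original triangle is $\sqrt{u^2+v^2}$, not the quantity in the statement. I will read (c) via the transformed triangle of Lemma~\ref{right-triangle-Ta}, whose hypotenuse was shown there to be $\bigl|\frac{u}{a}+av-uv\bigr|$.
\begin{itemize}
\item The sign should be checked explicitly. With $u/a=x+1$ and $av=y+1$,
$$\frac{u}{a}+av-uv=(x+1)+(y+1)-(x+1)(y+1)=1-xy=-(x+y),$$
using $xy=x+y+1$.
\item So the hypotenuse is $x+y$, matching the tangent-length description (two tangents of length $x$ and $y$ from the acute vertices). The formula in (c) should therefore be understood up to sign, i.e.\ as $uv-\frac{u}{a}-av$.
\item For $a=1$ the transformation is the identity, so the triangle itself has sides $u$, $v$, $|u+v-uv|$.
\end{itemize}

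\emph{Part (d).} Take $t=m\in\mathbb Z_{>0}$. Then $x=1+m>0$ and $y=1+\frac{2}{m}>0$, so by (a) there is a genuine rational right triangle with legs $m+2$ and $\frac{2(m+1)}{m}$ (taking $a=1$). Its area is $\frac{(m+1)(m+2)}{m}=\frac{m(m+1)(m+2)}{m^2}$. Scaling the triangle by $m$ produces a rational right triangle of area $m(m+1)(m+2)$, so this product is a congruent number.

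I expect the only genuine obstacle to be (c), namely the sign and interpretation issue; the other parts are direct bookkeeping from the earlier results.
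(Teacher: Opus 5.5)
Your proposal is correct and follows the paper's own route: Lemma~\ref{right-triangle-Ta} and Theorem~\ref{thm-ellipse-triangle} for (a), the parametrization $x=1+t$, $y=1+\frac{2}{t}$ with area $\frac{(t+1)(t+2)}{t}$ for (b) and (d), and the Lemma's distance computation for (c); your repair of (c) is also justified, since as printed it is false: $\frac{u}{a}+av-uv=-(x+y)$ (e.g.\ $3+4-12=-5$ for the $3$-$4$-$5$ triangle), and $x+y$ is the hypotenuse of the transformed triangle with legs $\frac{u}{a},av$, not of the triangle with legs $u,v$. Two small precisions: you should restrict to $x,y>0$ because the ellipse must lie inside the triangle, not merely to avoid degenerate triangles (points with $-1<x,y<0$ also give genuine triangles, but then $T_a(E_a)$ is their $c$-excircle), and in (a) ``rational right triangle'' can only mean rational vertices, because for $a\neq\pm1$ the hypotenuse $\sqrt{u^2+v^2}$ is generally irrational (e.g.\ $a=2$, $t=1$ gives legs $6,2$), which is why your choice $a=1$ in (d) is needed.
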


\begin{table}[h]
\centering
\caption{Congruent Numbers from Rational Right Triangles Circumscribing $E_a$}
\label{tab:congruent_numbers ellipse}
\renewcommand{\arraystretch}{1.4} 
\setlength{\tabcolsep}{20pt}      
\begin{tabular}{ccc}
\hline
$t$ & $\displaystyle \mathcal{A}=t(t+1)(t+2)$ & $\mathcal{A} \pmod{\mathbb{Q}^{\times 2}}$ \\
\hline
$1$ & $1 \cdot 2\cdot 3$  & $6$ \\

%$2$ & $(4\cdot 3)/2$ & $6$ \\[2pt]

$3$ & $3 \cdot 4\cdot 5$  & $15$ \\

$4$ & $4 \cdot 5\cdot 6$ & $30$ \\

$5$ & $5 \cdot 6\cdot 7$ & $210$ \\

$6$ & $6 \cdot 7\cdot 8$ & $42$ \\
\hline
\end{tabular}
\end{table}

\section{Circumcircle and Congruent Numbers}
\label{Circumcircle}

%\subsection{Circumcircle}

Let $\triangle ABC$ be a right angle triangle with $\angle C=\pi/2$ as in Figure \ref{fig:right_triangle_circumcircle}. Then the circumcentre of $\triangle ABC$ is the midpoint of the hypotenuse $AB$. Therefore, the circumradius is given by $R=\frac{AB}{2}$.   Let the right triangle have side lengths $(a,b,c)$ where $c$ is the hypotenuse. Then we have $c=2R$.

\begin{proposition}
Let $\triangle ABC$ be a rational right triangle with circumradius $R$. Then its side lengths can be parametrized as
$$
a=2R\frac{1-t^2}{1+t^2}, \qquad
b=2R\frac{2t}{1+t^2}, \qquad
c=2R,
$$
for some $t\in (0,1)\cap \mathbb{Q}$, where $c$ is the hypotenuse of the triangle.
\end{proposition}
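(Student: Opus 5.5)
The approach is to rescale by the hypotenuse and reduce to rational points on the unit circle, which are classically parametrized by the stereographic projection from $(-1,0)$. Since $\triangle ABC$ is a rational right triangle with $\angle C=\pi/2$, its hypotenuse $c$ is rational, and by the discussion above $c=2R$, so $R\in\mathbb{Q}_{>0}$. Put $X=a/c$ and $Y=b/c$. Then $X,Y\in\mathbb{Q}_{>0}$ and $X^2+Y^2=1$, so $(X,Y)$ is a rational point of the unit circle lying in the open first quadrant.

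The key step is to parametrize such points. Take the line through $(-1,0)$ and $(X,Y)$, and let $t=Y/(X+1)$ be its slope; $t$ is rational because $X,Y$ are. Substitute $Y=t(X+1)$ into $X^2+Y^2=1$ and divide out the known root $X=-1$. This gives
\begin{equation*}
X=\frac{1-t^2}{1+t^2},\qquad Y=\frac{2t}{1+t^2}.
\end{equation*}
Conversely, every rational $t$ gives a rational point on the circle.

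The conditions $X>0$ and $Y>0$ are then equivalent to $t>0$ and $1-t^2>0$, that is, to $t\in(0,1)\cap\mathbb{Q}$. This range is also forced directly: $0<Y$ and $X+1>1>Y$ give $0<t<1$. Multiplying back by $c=2R$ yields the stated formulas for $a$, $b$, $c$.

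There is no real obstacle here. The only points needing care are that $R$ is rational, which follows from $c\in\mathbb{Q}$, and that the open interval for $t$ exactly matches the positivity of the legs. One should also note that the labelling of the legs $a$ and $b$ is arbitrary, which matters only if a converse statement is intended.
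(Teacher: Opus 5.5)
Your proposal is correct and follows the paper's route: you normalize by $c=2R$ to get a rational point $(a/c,\,b/c)$ on the unit circle, apply the standard rational parametrization, and restrict to $0<t<1$ so that both legs are positive. You also show that every such point comes from a rational $t$, namely the slope $t=Y/(X+1)$ of the line through $(-1,0)$, whereas the paper takes this completeness of the parametrization for granted.
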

\begin{proof}
Here we have 
$a^2+b^2=4R^2 \quad \text{ or, equivalently} \quad  \left( \frac{a}{2R} \right)^2 +\left( \frac{b}{2R} \right)^2=1.$ 
 So the point $\left( \frac{a}{2R},\, \frac{b}{2R} \right)$ lies on the unit circle. Using the rational parametrization $\left(\frac{1-t^2}{1+t^2},\, \frac{2t}{1+t^2}  \right)$ of the unit circle for parameter $t > 0$, we obtain 
the desired result (we can assume $0 < t < 1$ so that all the sides are positive).    
\end{proof}
\begin{corollary}
    With the above notation, the area of $\triangle ABC$ is $\mathcal{A}=\frac{ab}{2} =4R^2\frac{t(1-t^2)}{(1+t^2)^2}.$
    Consequently, $\mathcal{A} =t(1-t^2)$ is a congruent number.
\end{corollary}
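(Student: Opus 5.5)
The corollary follows by substituting the parametrization from the preceding proposition into the area formula and then reducing modulo squares. I will take $a=2R\frac{1-t^2}{1+t^2}$ and $b=2R\frac{2t}{1+t^2}$ as there, with $t\in(0,1)\cap\mathbb{Q}$, so that $a$ and $b$ are the two legs.

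First I compute the area directly:
\[
\mathcal{A}=\frac{ab}{2}=\frac{1}{2}\cdot 2R\frac{1-t^2}{1+t^2}\cdot 2R\frac{2t}{1+t^2}=4R^2\frac{t(1-t^2)}{(1+t^2)^2}.
\]
This gives the first claim.

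For the second claim, I read "$\mathcal{A}=t(1-t^2)$" in the sense the paper uses elsewhere, namely equality modulo $\mathbb{Q}^{\times 2}$, as in the earlier theorem on $\tau$-congruent numbers. Since $\triangle ABC$ is a rational right triangle, $c=2R$ is rational, and so is $1+t^2$. Hence
\[
\left(\frac{2R}{1+t^2}\right)^2\in\mathbb{Q}^{\times 2},
\]
and therefore $\mathcal{A}\equiv t(1-t^2)\pmod{\mathbb{Q}^{\times 2}}$.

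It remains to see that $t(1-t^2)$ is itself a congruent number. The condition $0<t<1$ makes $t(1-t^2)$ positive. Take the scaled triangle with $R=\frac{1+t^2}{2}$. Its legs are $1-t^2$ and $2t$, and its hypotenuse is $1+t^2$. All three are positive rationals and $(1-t^2)^2+(2t)^2=(1+t^2)^2$, so this is a rational right triangle. Its area is exactly $t(1-t^2)$. Since congruent numbers are defined up to rational squares, any rational multiple of this value by a square class also counts.

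I expect no genuine obstacle. The only point needing care is the interpretation of "$\mathcal{A}=t(1-t^2)$": it must be read as equality modulo squares, or else justified by the specific choice $R=(1+t^2)/2$. I would state both readings explicitly.
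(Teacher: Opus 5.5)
Your proposal is correct and follows the route the paper takes implicitly (the corollary is stated without a separate proof). You substitute the parametrization into $\frac{ab}{2}$ and discard the rational square factor $\left(\frac{2R}{1+t^2}\right)^2$, which is legitimate because $R=c/2\in\mathbb{Q}$. Your extra observation is also sound: taking $R=\frac{1+t^2}{2}$ gives the rational right triangle $(1-t^2,\,2t,\,1+t^2)$ with area exactly $t(1-t^2)$, which makes the literal equality in the statement precise.
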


\begin{figure}[h]
\centering

\begin{tikzpicture}[scale=0.7]

\coordinate (C) at (0,0);
\coordinate (A) at (4,0);
\coordinate (B) at (0,3);
\coordinate (O) at (2,1.5);

\draw[thick] (C)--(B)--(A)--cycle;
\draw[blue, thick] (O) circle (2.5);
\draw (0,0) rectangle (0.3,0.3);
\draw[dashed]  (A)--(B);

\fill (A) circle (2pt);
\fill (B) circle (2pt);
\fill (C) circle (2pt);
\fill (O) circle (2pt);

\node[below left] at (C) {$C$};
\node[below right] at (A) {$A$};
\node[above left] at (B) {$B$};
\node[above right] at (O) {$O$};

\node[below] at ($(A)!0.5!(B)$) {$c$};
\node[below] at ($(A)!0.5!(C)$) {$b$};
\node[above right] at ($(B)!0.55!(C)$) {$a$};

\end{tikzpicture}

\caption{A right triangle inscribed in its circumcircle.}
\label{fig:right_triangle_circumcircle}

\end{figure}

\section{Excircles and Congruent Numbers}
\label{Excircles}
 Let $\triangle ABC$ be a right triangle with with $\angle C=\pi/2$ and side lengths $BC=a$, $CA=b$, $AB=c$ where $c$ is the hypotenuse. Recall that the excircle opposite a vertex of a triangle is the circle tangent to one side of the triangle and to the extensions of the other two sides. These circles are referred to as the $a$-, $b$-, and $c$-excircles. For $\ell\in\{a,b,c\}$, let $r_\ell$ denote the corresponding exradius. We study rational right triangles satisfying $r_\ell=1.$

\begin{proposition}
With the above notation, let $s=\frac{a+b+c}{2}$ be the semiperimeter of $\triangle ABC$. Then the exradii are given by 
\begin{align*}
r_a &=s-b,\qquad r_b=s-a,\qquad
r_c=s, \\
\text{ equivalently, }\quad r_a&=\frac{a-b+c}{2},\qquad r_b=\frac{c-a+b}{2},\qquad
r_c=\frac{a+b+c}{2}.
\end{align*}
\end{proposition}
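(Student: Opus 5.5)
The plan is to start from the general exradius formula $r_\ell=\frac{\mathcal{A}}{s-\ell}$ for $\ell\in\{a,b,c\}$, where $\mathcal{A}$ is the area of $\triangle ABC$, and then use the right angle at $C$ to simplify each quotient.

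First I will justify the general formula briefly. Let $O_a$ be the centre of the $a$-excircle. The area of $\triangle ABC$ decomposes as
\[
\mathcal{A}=[O_aAB]+[O_aAC]-[O_aBC].
\]
Each of these three triangles has height $r_a$ over the sides $c$, $b$, $a$ respectively. Hence $\mathcal{A}=\frac{r_a}{2}(b+c-a)=r_a(s-a)$, and the same argument gives $r_b=\frac{\mathcal{A}}{s-b}$ and $r_c=\frac{\mathcal{A}}{s-c}$.

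Next I will use $\angle C=\pi/2$, so that $\mathcal{A}=\frac{ab}{2}$ and $c^2=a^2+b^2$. The key identities are
\[
(s-a)(s-b)=\frac{c^2-(a-b)^2}{4}=\frac{2ab}{4}=\mathcal{A},
\qquad
s(s-c)=\frac{(a+b)^2-c^2}{4}=\frac{2ab}{4}=\mathcal{A}.
\]
Dividing these gives $r_a=\frac{\mathcal{A}}{s-a}=s-b$, $r_b=\frac{\mathcal{A}}{s-b}=s-a$, and $r_c=\frac{\mathcal{A}}{s-c}=s$. Substituting $s=\frac{a+b+c}{2}$ then yields the equivalent expressions $r_a=\frac{a-b+c}{2}$, $r_b=\frac{c-a+b}{2}$ and $r_c=\frac{a+b+c}{2}$.

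The only step that needs care is the signed area decomposition for the excircle centre. The reason the term $[O_aBC]$ is subtracted is that $O_a$ lies on the opposite side of $BC$ from $A$. As an alternative, I could use tangent lengths instead: the tangents from $C$ to the $a$-excircle have length $s-b$. The excircle centre $O_a$, together with $C$ and the two tangency points on lines $CB$ and $CA$, forms a square because the angle at $C$ is right. This gives $r_a=s-b$ directly, and the same square argument applies to $r_b$ and $r_c$.
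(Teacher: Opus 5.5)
Your proof is correct and follows essentially the same route as the paper: you start from $r_\ell=\mathcal{A}/(s-\ell)$ and use $c^2=a^2+b^2$ to get $(s-a)(s-b)=s(s-c)=\frac{ab}{2}$, which are exactly the paper's identities $(c+a-b)(c-a+b)=(a+b+c)(a+b-c)=2ab$. The only differences are that you derive the exradius formula from the area decomposition $\mathcal{A}=[O_aAB]+[O_aAC]-[O_aBC]$ instead of citing it, and that your alternative square argument via the tangent lengths from $C$ is also valid.
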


\begin{proof}

Since $\triangle ABC$ is right-angled at $C$, its area is $\Delta=\frac{ab}{2}.$ Using the formula $r_\ell=\frac{\Delta}{s-\ell},$ 
for the exradius opposite the side $\ell$ (see \cite{johnson2007advanced}), we obtain
$$r_a=\frac{ab}{-a+b+c} \qquad r_b=\frac{ab}{a-b+c},\qquad r_c=\frac{ab}{a+b-c}.$$
Now, using $c^2=a^2+b^2$, we have
$$(c+a-b)(c-a+b)=c^2-(a-b)^2=2ab ,$$
$$(a+b+c)(a+b-c)=(a+b)^2-c^2=2ab.$$
Hence
$$-a+b+c=\frac{2ab}{a-b+c}, \qquad a-b+c=\frac{2ab}{c-a+b},\qquad a+b-c=\frac{2ab}{a+b+c}.$$
Substituting into the above expressions for the exradii gives the desired result. 
\end{proof}

\begin{theorem}\label{thm:excircle-parametrization}
Let $\triangle ABC$ be a rational right triangle with side lengths $
BC=a,~ CA=b,~ AB=c,$ where $c$ is the hypotenuse.

\begin{enumerate}

\item[(a)] If the $a$-exradius satisfies $r_a=1$, then
$(a,\;b,\;c)=(x+1,\,z-1,\,z-x),$
where $(x,z)$ is a rational point on $X_a:\ z-zx-x=1.$

\item[(b)] If the $b$-exradius satisfies $r_b=1$, then
$(a,\;b,\;c)=(z-1,\,x+1,\,z-x),$
where $(x,z)$ is a rational point on $X_b:\ z(x-1)+x+1=0.$

\item[(c)] If the $c$-exradius satisfies $r_c=1$, then
$(a,\;b,\;c)=(1-y,\,1-x,\,x+y),$
where $(x,y)$ is a rational point on
$X_c:\ xy+x+y=1.$
\end{enumerate}
Each of the curves $X_a,X_b,$ and $X_c$ is a rational curve of genus zero. Consequently, there exist infinitely many rational right triangles with the corresponding exradius equal to $1$.
\end{theorem}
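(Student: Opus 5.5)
The approach is to combine the exradius formulas of the preceding Proposition with the Pythagorean relation $a^2+b^2=c^2$. In each case, imposing $r_\ell=1$ gives one linear relation among $a,b,c$. We then introduce two auxiliary variables so that this linear relation holds identically. With that choice, the Pythagorean relation collapses to the equation of the stated curve.

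\emph{Case (a).} By the Proposition, $r_a=\frac{a-b+c}{2}$, so $r_a=1$ is equivalent to $c=2-a+b$. Put $x=a-1$ and $z=b+1$. Then $(a,b,c)=(x+1,\,z-1,\,z-x)$, and conversely any triple of this form satisfies $r_a=\frac{(x+1)-(z-1)+(z-x)}{2}=1$. Substituting into $a^2+b^2=c^2$ gives
\[
(x+1)^2+(z-1)^2=(z-x)^2 .
\]
After the quadratic terms $x^2$ and $z^2$ cancel, this reads $2x-2z+2+2xz=0$, which is $z-zx-x=1$, i.e.\ the curve $X_a$.

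\emph{Case (b).} This is the mirror image of (a) under $a\leftrightarrow b$. We use $r_b=\frac{c-a+b}{2}$ and set $a=z-1$, $b=x+1$, $c=z-x$. The Pythagorean relation is literally the same as in (a), and rearranging gives $z(x-1)+x+1=0$, the curve $X_b$.

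\emph{Case (c).} Here $r_c=\frac{a+b+c}{2}=1$, so $a+b+c=2$. Put $a=1-y$ and $b=1-x$; then $c=x+y$. The Pythagorean relation $(1-y)^2+(1-x)^2=(x+y)^2$ reduces to $2-2x-2y=2xy$, i.e.\ $xy+x+y=1$, the curve $X_c$.

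Since every substitution is invertible and rational, rational triangles with the prescribed exradius correspond exactly to rational points on these curves.

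For the genus-zero claim, we solve each equation for one variable in the same way as \eqref{equation_of y}:
\[
z=\frac{1+x}{1-x}\ \text{ on } X_a \text{ and } X_b, \qquad y=\frac{1-x}{1+x}\ \text{ on } X_c .
\]
So each curve is the graph of a rational function of $x$, hence rational of genus $0$. For example, the rational point $(0,1)$ lies on all three curves.

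The main point needing care is that the rational points must give genuine triangles, with all side lengths positive; this is the step I expect to require the most attention. I would restrict to $x\in(0,1)\cap\mathbb{Q}$.
\begin{itemize}
\item In (a), $x\in(0,1)$ gives $z>1$ and $z>x$, so $a=x+1$, $b=z-1$ and $c=z-x$ are all positive. The same holds in (b).
\item In (c), $x\in(0,1)$ gives $y\in(0,1)$, so $1-y$, $1-x$ and $x+y$ are all positive.
\end{itemize}
A positive triple satisfying $a^2+b^2=c^2$ automatically satisfies the triangle inequalities. Since $(0,1)\cap\mathbb{Q}$ is infinite, this yields infinitely many rational right triangles in each case.
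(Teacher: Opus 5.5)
Your proposal is correct and follows essentially the paper's route: the condition $r_\ell=1$ gives a linear relation via the exradius Proposition, and substituting it into $a^2+b^2=c^2$ yields the conic. The difference is only in presentation. The paper introduces $x,y,z$ as tangent lengths from the vertices to the excircle (so $r_c=s=1$ forces $z=1$), while you define the same variables algebraically. You also work out cases (a), (b) and the positivity range $x\in(0,1)\cap\mathbb{Q}$ explicitly, where the paper only says they are analogous.
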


\begin{proof}
We prove the statement for the $c$-excircle, the proofs of the $a$- and $b$-excircle cases are analogous.

Let the $c$-excircle touch the hypotenuse $AB$ at $T$, and the extensions of $CA$ and $CB$ at $D$ and $E$, respectively (as shown in the figure \ref{fig:c-excircle}).
Set $AT=x,\; BT=y,\; CD=z.$ By equality of tangent lengths from an external point,
$AT=AD=x,\; BT=BE=y,\; CD=CE=z.$
Hence $c=AB=AT+TB=x+y,$
$a=BC=CE-BE=z-y,$ and  $b=CA=CD-AD=z-x.$
Therefore, $(a,\;b,\;c)=(z-y,\,z-x,\;x+y).$
Since $r_c=\frac{a+b+c}{2}=1,$ we obtain $(z-y)+(z-x)+(x+y)=2,$ and hence $z=1.$
Substituting this into the expressions for $a$ and $b$ gives $$(a,\;b,\;c)=(1-y,\;1-x,\;x+y).$$  Now imposing the Pythagorean condition $a^2+b^2=c^2$ gives $(1-y)^2+ (1-x)^2=(x+y)^2.$
After simplification we obtain the Diophantine curve, $xy+x+y=1.$   Hence rational points $(x,y)$ on the genus zero curve $xy+x+y=1$ with $0<x , y<1$ parameterize rational right triangles whose $c$-excircle has radius $1$.  Since $xy+x+y=1$, we have $x+y=1-xy$. Hence
$$ a+b=2-(x+y)=1+xy > 1-xy= x+y=c,$$
and similarly $a+c>b,\ b+c>a$, so $(a,b,c)$ forms a non-degenerate triangle. Moreover, $c-a=y(1-x)>0,\; c-b=x(1-y)>0,$ thus $c=x+y$ is the largest side.

The argument for the $a$-excircle and $b$-excircles are similar and yield the parametrizations: 
\begin{align*}
(a,\;b,\;c)&=(x+1,\;z-1,\;z-x), \quad X_a:\ z(1-x)-x-1=0,\\
\text{ and } \quad (a,\;b,\;c)&=(z-1,\;x+1,\;z-x),
\quad X_b:\ z(x-1)+x+1=0.    
\end{align*}
In each case, the defining curve admits a rational parametrization. Hence $X_a,X_b,$ and $X_c$ are rational curves of genus zero and therefore have infinitely many raional points. Therefore, each of the curves $X_a,X_b,$ and $X_c$ gives rise to infinitely many rational right triangles with the corresponding exradius equals to $1$. 
\end{proof}

Table~\ref{table:congruent-excircle-summary} summarizes the parametrizations of rational right triangles whose associated excircle has radius \(1\), together with the defining curve and the corresponding congruent number $\mathcal{A}$.

\begin{table}[h]
\centering
\renewcommand{\arraystretch}{1.3}
\setlength{\tabcolsep}{12pt}

\begin{tabular}{cccc}
\hline
Excircle & Parametrization & Defining curve & Area $\mathcal A$ \\
\hline
$a$-excircle &
$(x+1,\;z-1,\;z-x)$ &
$z(1-x)-x-1=0$ &
$\displaystyle x\frac{x+1}{1-x}$ \\
\hline
$b$-excircle &
$(z-1,\;x+1,\;z-x)$ &
$z(x-1)+x+1=0$ &
$\displaystyle x\frac{x+1}{1-x}$ \\
\hline
$c$-excircle &
$(1-y,\;1-x,\;x+y)$ &
$xy+x+y=1$ &
$\displaystyle x\frac{1-x}{1+x}$ \\
\hline
\end{tabular}

\caption{Summary of parametrizations for rational right triangles with unit excircle radius.}
\label{table:congruent-excircle-summary}
\end{table}

\begin{theorem}
Up to a rational square factor, every congruent number arises as the area of a rational right triangle whose $a$-, $b$-, or $c$-excircle has radius $1$.
\end{theorem}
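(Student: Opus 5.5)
The plan is to rescale: every congruent number $n$ is the area of some rational right triangle, and the right triangles with a prescribed exradius equal to $1$ are exactly the scaled copies of it. Concretely, let $n$ be a congruent number, so there is a rational right triangle $(a_0,b_0,c_0)$ with $a_0^2+b_0^2=c_0^2$ and $\frac12 a_0b_0=n$. The proposition on exradii shows that $r_c^{(0)}=\frac{a_0+b_0+c_0}{2}$ is a positive rational number, and likewise $r_a^{(0)}=\frac{a_0-b_0+c_0}{2}$ and $r_b^{(0)}=\frac{c_0-a_0+b_0}{2}$. These last two are positive because $c_0>|a_0-b_0|$.

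Fix one of them, say $r=r_c^{(0)}$. Set $\lambda=1/r\in\mathbb{Q}_{>0}$ and consider the scaled triangle $(a,b,c)=(\lambda a_0,\lambda b_0,\lambda c_0)$. It is still a rational right triangle with hypotenuse $c$. Each exradius formula in the proposition is homogeneous of degree one in the sides, so its $c$-exradius is $\lambda r=1$. Its area is $\lambda^2 n$, which agrees with $n$ modulo $\mathbb{Q}^{\times 2}$. The same argument with $r_a^{(0)}$ or $r_b^{(0)}$ gives a triangle whose $a$- or $b$-excircle has radius $1$. In fact all three options work for every $n$, which is stronger than the ``or'' in the statement.

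To tie this to Theorem~\ref{thm:excircle-parametrization} explicitly, I would read off the corresponding point on the curve. In the $c$-excircle case, put $x=1-b$ and $y=1-a$. Since $a+b+c=2$, we get $x+y=2-a-b=c$, and the Pythagorean relation becomes $xy+x+y=1$, so $(x,y)\in X_c(\mathbb{Q})$ with $0<x,y<1$. Table~\ref{table:congruent-excircle-summary} then gives the area as $x\frac{1-x}{1+x}$, and this is congruent to $n$ modulo squares. The $a$- and $b$-cases are handled the same way, using the tangent-length coordinates of the theorem.

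The only point needing care is positivity and non-degeneracy: $\lambda$ must be positive, and the resulting point must land in the range of parameters that gives a genuine triangle. Both follow immediately from the triangle inequality. So I expect no real obstacle here; the substance is just the scaling-invariance of congruent numbers modulo $\mathbb{Q}^{\times 2}$.
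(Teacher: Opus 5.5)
Your proposal is correct and is essentially the paper's argument: scale the rational right triangle by $1/r_\ell$ (positive and rational by the exradius proposition), so the $\ell$-exradius becomes $1$ and the area changes only by the square factor $r_\ell^{-2}$. Your extra checks are also correct: the positivity of $r_a$ and $r_b$, and the explicit point $(x,y)=(1-b,1-a)$ on $X_c$ with $0<x,y<1$. They spell out details the paper leaves implicit.
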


\begin{proof}
    Let $\triangle$ be a rational right triangle with area $n$, and let $r_\ell$ denote the corresponding exradius, where $\ell\in\{a,b,c\}$. Scaling $\triangle$ by the factor $\frac1{r_\ell}$ produces a rational right triangle whose $\ell$-excircle has radius $1$. Since area scales quadratically, the resulting triangle has area $\frac{n}{r_\ell^2}.$ Hence $n \equiv \frac{n}{r_\ell^2} \pmod{\Q^{\times 2}},$ so the new area differs from $n$ by a rational square factor.
\end{proof}

\begin{corollary}
Every congruent number is represented, up to a rational square factor, by the area formulas appearing in Table~\ref{table:congruent-excircle-summary}.
\end{corollary}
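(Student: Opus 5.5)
The corollary should follow by combining the preceding theorem with the explicit parametrizations of Theorem~\ref{thm:excircle-parametrization}. Let $n$ be a congruent number, realized as the area of some rational right triangle $\triangle$. By the preceding theorem, after scaling $\triangle$ by $1/r_\ell$ for a chosen $\ell\in\{a,b,c\}$, we get a rational right triangle $\triangle'$ whose $\ell$-excircle has radius $1$ and whose area is $n/r_\ell^2\equiv n \pmod{\Q^{\times 2}}$. It is simplest to use $\ell=c$, since $r_c=s$ is always a positive rational, so the scaling factor is rational. The $a$- and $b$-cases work equally well.

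Next, Theorem~\ref{thm:excircle-parametrization}(c) applies to $\triangle'$. Its sides are $(1-y,1-x,x+y)$ for a rational point $(x,y)$ on $X_c: xy+x+y=1$, with $0<x,y<1$. The area is therefore $\mathcal{A}=\tfrac12(1-x)(1-y)$. Solving the curve for $y$ gives $y=\frac{1-x}{1+x}$, hence $1-y=\frac{2x}{1+x}$, and
\[
\mathcal{A}=\tfrac12(1-x)\cdot\frac{2x}{1+x}=x\frac{1-x}{1+x},
\]
which is the table entry for the $c$-excircle. The $a$- and $b$-rows are handled the same way. For instance, on $X_a$ we have $z=\frac{1+x}{1-x}$, so $z-1=\frac{2x}{1-x}$ and $\mathcal{A}=\tfrac12(x+1)(z-1)=x\frac{x+1}{1-x}$.

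The only real checkpoint is that the area computed from the parametrization actually matches the table formula. For $X_c$ this is the short computation above. Apart from that, the argument is bookkeeping: every congruent class modulo $\Q^{\times 2}$ contains some $\mathcal{A}$ of the tabulated form, taken at a rational point of the relevant curve.
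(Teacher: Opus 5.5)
Your proposal is correct and follows the paper's route: the corollary is the preceding scaling theorem (rescale by $1/r_\ell$, here with $\ell=c$) combined with the parametrization of Theorem~\ref{thm:excircle-parametrization}. Your explicit check that $\tfrac12(1-x)(1-y)=x\frac{1-x}{1+x}$ on $X_c$ (and the analogous computation on $X_a$) supplies the area computation that the paper records in the table without derivation.
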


\begin{figure}[ht]
\centering

\begin{tikzpicture}[scale=0.50]

\coordinate (C) at (0,0);
\coordinate (A) at (0,3);
\coordinate (B) at (4,0);

\draw[thick] (A)--(B)--(C)--cycle;
\draw[dashed] (A)--(0,7);
\draw[dashed] (B)--(7,0);

\node[left] at (A) {$A$};
\node[below] at (B) {$B$};
\node[below left] at (C) {$C$};

\node[left] at ($(A)!0.5!(C)$) {$b$};
\node[below] at ($(B)!0.5!(C)$) {$a$};
\node[above right] at ($(A)!0.5!(B)$) {$c$};

\draw (0.3,0) -- (0.3,0.3) -- (0,0.3);

\coordinate (Ic) at (6,6);
\draw[blue, thick] (Ic) circle (6);

\fill[blue] (Ic) circle (1.5pt)
node[above right] {$I_c$};

\coordinate (D) at (0,6);
\fill (D) circle (1.5pt);
\node[left] at (D) {$D$};

\coordinate (E) at (6,0);
\fill (E) circle (1.5pt);
\node[below] at (E) {$E$};

\coordinate (T) at ($(A)!0.6!(B)$);
\fill (T) circle (1.5pt);
\node[below left] at (T) {$T$};

\draw [decorate,
decoration={brace, amplitude=5pt, raise=2pt}]
(D) -- (A)
node [midway, left=6pt] {$x$};

\draw [decorate,
decoration={brace, amplitude=5pt, raise=2pt}]
(A) -- (T)
node [midway, above right=4pt] {$x$};

\draw [decorate,
decoration={brace, amplitude=5pt, raise=2pt}]
(B) -- (E)
node [midway, below=3pt] {$y$};

\draw [decorate,
decoration={brace, amplitude=5pt, raise=2pt}]
(T) -- (B)
node [midway, above right=2.5pt] {$y$};

\node[blue] at (8,8) {$c$-excircle};

\end{tikzpicture}

\caption{The $c$-excircle of a right triangle.}
\label{fig:c-excircle}

\end{figure}

\subsection*{\it Acknowledgement} The first author acknowledges the support of the DST-INSPIRE Faculty Fellowship [DST/INSPIRE/04/2024/004189]. The second author acknowledges the Institute Fellowship and the excellent research environment provided by the Indian Institute of Technology Madras.

\bibliographystyle{plain}
\bibliography{references}

@article {Chahal,
    AUTHOR = {Chahal, Jasbir S.},
     TITLE = {Some remarks on rational right triangles},
   JOURNAL = {Expo. Math.},
  FJOURNAL = {Expositiones Mathematicae},
    VOLUME = {42},
      YEAR = {2024},
    NUMBER = {6},
     PAGES = {Paper No. 125623, 6},
      ISSN = {0723-0869,1878-0792},
   MRCLASS = {11D25 (51M04)},
  MRNUMBER = {4819800},
       DOI = {10.1016/j.exmath.2024.125623},
       URL = {https://doi.org/10.1016/j.exmath.2024.125623},
}

@article {Goins-Maddox,
    AUTHOR = {Goins, Edray Herber and Maddox, Davin},
     TITLE = {Heron triangles via elliptic curves},
   JOURNAL = {Rocky Mountain J. Math.},
  FJOURNAL = {The Rocky Mountain Journal of Mathematics},
    VOLUME = {36},
      YEAR = {2006},
    NUMBER = {5},
     PAGES = {1511--1526},
      ISSN = {0035-7596,1945-3795},
   MRCLASS = {14H52 (11G05 51M04)},
  MRNUMBER = {2285297},
MRREVIEWER = {Florian\ Luca},
       DOI = {10.1216/rmjm/1181069379},
       URL = {https://doi.org/10.1216/rmjm/1181069379},
}

@article {Rusin,
    AUTHOR = {Rusin, David J.},
     TITLE = {Rational triangles with equal area},
   JOURNAL = {New York J. Math.},
  FJOURNAL = {New York Journal of Mathematics},
    VOLUME = {4},
      YEAR = {1998},
     PAGES = {1--15},
      ISSN = {1076-9803},
   MRCLASS = {11G35 (11G05 14J20)},
  MRNUMBER = {1489407},
MRREVIEWER = {Joseph\ H.\ Silverman},
       URL = {http://nyjm.albany.edu:8000/j/1998/4_1.html},
}

@article {Ghale-Das-Chakraborty,
    AUTHOR = {Ghale, Vinodkumar and Das, Shamik and Chakraborty, Debopam},
     TITLE = {A {H}eron triangle and a {D}iophantine equation},
   JOURNAL = {Period. Math. Hungar.},
  FJOURNAL = {Periodica Mathematica Hungarica. Journal of the J\'anos Bolyai
              Mathematical Society},
    VOLUME = {86},
      YEAR = {2023},
    NUMBER = {2},
     PAGES = {530--537},
      ISSN = {0031-5303,1588-2829},
   MRCLASS = {11D25 (11G05 51M04)},
  MRNUMBER = {4591893},
MRREVIEWER = {Jasbir\ Singh\ Chahal},
       DOI = {10.1007/s10998-022-00491-5},
       URL = {https://doi.org/10.1007/s10998-022-00491-5},
}

@book{johnson2007advanced,
  author = {Roger A. Johnson},
  title = {Advanced Euclidean Geometry},
  publisher = {Dover Publications},
  address = {Mineola, New York},
  year = {2007},
  isbn = {9780486462370},
  note = {Reprint edition, p. 70}
}

\end{document}